\documentclass[a4paper,12pt]{article}
\usepackage{amssymb}
\usepackage{amsthm}
\usepackage[dvips]{graphicx}
\usepackage{amsmath}
\usepackage{fancyhdr}
\usepackage[all]{xy}

\setlength{\oddsidemargin}{4mm}
\setlength{\evensidemargin}{4mm}
\setlength{\textwidth}{152mm}
\setlength{\headheight}{6mm}
\setlength{\textheight}{225mm}

\pagestyle{fancy} \setlength{\headsep}{10mm} \setlength{\topmargin}{-3mm}
\lhead{}
\chead{}
\rhead{}
\lfoot{}
\cfoot{\thepage}
\rfoot{}

\theoremstyle{plain}
\newtheorem{theorem}{Theorem}[section]
\newtheorem{coro}[theorem]{Corollary}

\newtheorem{prop}[theorem]{Proposition}


\newtheorem{theo}[theorem]{Theorem}


\theoremstyle{definition}
\newtheorem{defi}[theorem]{Definition}
\newtheorem{rema}[theorem]{Remark}
\newtheorem{exam}[theorem]{Example}
\newtheorem*{notations}{\sc Notations}
\newtheorem*{acknowledgements}{\sc Acknowledgements}

\renewcommand{\baselinestretch}{1.13}

\newenvironment{reference}[1]{%

\begin{flushleft}\normalsize{\textsc{References}}\end{flushleft}%
\begin{enumerate}\setlength{\itemsep}{-5pt}\small
}{\end{enumerate}}

\makeatletter
\@addtoreset{equation}{section}
\makeatother

\makeatletter
\renewcommand{\section}{%
\@startsection{section}{1}{\z@}%
{3.5ex \@plus -1ex \@minus -.2ex}%
{2.3ex \@plus.2ex}%
{\reset@font\normalsize\scshape}}

\makeatother

\makeatletter
\renewcommand{\subsection}{%
\@startsection{subsection}{2}{\z@}%
{-3.5ex \@plus -1ex \@minus -.2ex}%
{-2.3ex \@plus.2ex}%
{\reset@font\normalsize\scshape}}
\makeatother


\usepackage[OT2,T1]{fontenc}
\DeclareSymbolFont{cyrletters}{OT2}{wncyr}{m}{n}
\DeclareMathSymbol{\Sha}{\mathalpha}{cyrletters}{"58}




\title{\vspace*{-22mm}
\large{\textbf{
Mild pro-$p$-groups and $p$-extensions of imaginary quadratic fields with non-trivial $p$-class group \\
}}
\footnotetext{2010 Mathematics Subject Classification: 11R32, (11R37).}
\footnotetext{Key words:
Mild pro-$p$-groups,  restricted ramification, class field
theory.
}
}


\author{
\textsc{\normalsize Zakariae Bouazzaoui}
 \and
\textsc{\normalsize Abdelaziz El Habibi}
}

\date{}

\begin{document}
{\renewcommand{\baselinestretch}{1.05} \maketitle}

\vspace*{-14mm}
\renewcommand{\abstractname}{}
{\renewcommand{\baselinestretch}{1.05}
\begin{abstract}{\small
\noindent\textsc{Abstract.}
Let $k$ be an imaginary quadratic field and $p$ an odd prime number such that the $p$-rank of the class group of $k$
is one. Let $S$ be a finite set of places of $k$ distinct from $p$-adic places. We give sufficient conditions for the Galois group $G_S$, of the maximal pro-$p$-extension of $k$ which is unramified outside $S$, to be \textit{mild}, hence of cohomological dimension $2$.
}\end{abstract}}

\section{\bf Introduction}

Let $p$ be a prime number, and let $k$ be a number field which is a finite extension of the field of rationals $\mathbb Q$. For a finite set $S$ of primes of $k$, consider the Galois group
\begin{equation*}
    G_{S}=\mathrm{Gal}(k_{S}/k)
\end{equation*}
of the maximal pro-$p$-extension $k_{S}$ of $k$ which is unramified outside $S$. Galois extensions with ramification restricted to finite sets of primes arise naturally in algebraic
number theory and arithmetic geometry, e.g., in terms of representations coming from
the Galois action on the (étale) cohomology of algebraic varieties defined over number fields. A non $p$-adic prime ideal $v$ of $k$ cannot ramify in any $p$-extensions over $k$ if the absolute norm $|\mathcal{O}_k/v| \not\equiv 1 \pmod{p}$,
where $\mathcal{O}_k$ is the ring of algebraic integers in $k$.
Since we consider only such pro-$p$-extensions over $k$ in the following,
we assume that
$S$ contains no archimedean primes if $p \neq 2$,
and that
$|\mathcal{O}_k/v| \equiv 1 \pmod{p}$ for any prime ideal $v\in S$ which is not $p$-adic. The structure of $G_{S}$ reflects arithmetic properties of the field $k$ at the prime $p$. In particular when $S=\emptyset$, the derived series of $\mathrm{Gal}(k_{\emptyset}/k)$ correspond to the $p$-class field tower of $k$, which is a classical
object in algebraic number theory. If $S$ contains the places of $k$ lying above $p$, the group $G_S$ is fairly well understood. In particular it has been known that it is of cohomological dimension $\mathrm{cd}(G_{S})$ is at most $2$ (assuming $k$ is totally imaginary if $p=2$) and
often a duality group (cf. \cite{NSW}).\\
So far the structure of $G_S$ remains mysterious when $S$ is disjoint with the $p$-adic places. By celebrate theorems of Golod and Shafarevich, the group $G_S$ is infinite under some condition and it is a so-called \textit{fab} pro-$p$-group, i.e., the maximal abelian quotient of every open subgroup of $G_S$ is finite. However, the cohomological dimension of $G_S$ is not well understood.\\
J. Labute introduce \textit{mild} pro-$p$-groups in \cite{Lab06} to give examples of groups $G_S$ with $\mathrm{cd}(G_S)=2$ when $S$ contains no $p$-adic places. These pro-$p$-groups are finitely generated and admits a presentation in terms of generators and defining relations, where the relations satisfy a condition of being free in a maximal possible way with respect to a suitable filtration. Equivalently, it is proved that $G_S$ is mild if the \textit{Poincaré series} associated to the filtration satisfies certain equality (e.g. \cite{Lab06}). Mild pro-$p$-groups are of cohomological dimension $2$, which is one of the main reasons for their importance also from an arithmetic point of view. Furthermore, these groups are not $p$-adic analytic. This should holds for $G_S$ as a consequence of \textit{tame Fontaine-Mazur} conjecture, which asserts that $G_S$ has no infinite $p$-adic analytic quotients (cf. \cite{Fontaine-Mazur}).
For the field $\mathbb{Q}$, Labute came up with the first examples of finite sets $S$ of primes of $\mathbb{Q}$ such that $S$ contains no $p$-adic places for an odd prime $p$ and  $\mathrm{cd}(G_{S})=2$. For this, the author proves that $G_{S}$ is mild for a suitable set of primes $S$ such that the associated linking diagram $\Gamma_{S}(p)$ forms a non-singular circuit in the sens of \cite[Definition 1.4]{Lab06}.
Later on, A. Schmidt \cite{Sch07}, \cite{Sch10} extended the result of Labute by arithmetic methods and weakened Labute’s condition on $S$. Shmidt's results uses a cup product criterion on the first cohomology groups that has shown its effectiveness in the problem of producing
mild pro-$p$-groups. In \cite{for11} Forr\'e gave simple and direct proof for the algebraic criteria for mildness. Here we use the approach based on Koch theory to study mildness \cite{Koch}.\\
The property of mildness has been extensively studied in the last decades. Examples of groups $G_S$ which are mild are known for the field of rationals $\mathbb{Q}$, and for imaginary quadratic fields not containing $p$-roots of unity with prime to $p$ class number \cite{Vog06}. We can also mention the papers \cite{Koch77}, \cite{Schmidt2006}, \cite{Bush-Labute}, \cite{Gar14} and \cite{Gar15}. Recently, in \cite{Minac21} Mina\v{c} et al. proved that if the
maximal pro-$p$ Galois group of a field is mild, then Positselski’s and Weigel’s Koszulity conjectures hold true for such a field.\vskip 4pt

The object of this paper is to study the property mild for the group $G_S$ in the case when $k$ is an imaginary quadratic number field with nontrivial $p$-class group, where $p$ is an odd prime number. By tracing Koch's theory, we first give in section $2$ a Koch type presentation by generators and relations for $G_S$ (Theorem \ref{minimal presentation}):
\[
  \xymatrix @C=2pc{1\ar@[>][r]&R\ar@[>][r]&F\ar@[>][r]^{\pi}&G_S\ar@[>][r]&1,}
\]
where $F$ is a free pro-$p$-group and $R$ is the relations group. This is done for certain set of primes of $k$ whose norms are congruent to $1$ modulo $p$, where the relations modulo the third step of the lower central series are described by linking numbers $\widetilde{l}_{*,*}$ of primes.\\
In section $3$ we describe an arithmetical sufficient condition for mildness of $G_S$ (Theorem \ref{mild}). These conditions are in terms of congruences modulo $p$ of liking numbers appearing in Theorem \ref{minimal presentation}. Furthermore,
in Corollary $3.3$ we give distinguished properties of the group $G_S$, for example we obtain that $G_S$ is a duality group with Euler characteristic $\chi(G_S)=1$. We obtain in Proposition \ref{4 generators mildness} a sufficient conditions for mildness of $G_S$ with $4$ generators.\\
The assumption of $p$-rank equals $1$ is to ease computations. However, by similar reasoning, we can obtain the same results in the case of fields for which the $p$-rank of the class group is less than $|S|$ (see Remark \ref{remark V_S}).\\

As an illustration, using PARI/GP we give the following example. Let $k$ be the quadratic field $\mathbb{Q}(\sqrt{-23})$ and $p=3$.
We consider the primes $\ell_{0}=13$, $\ell_{1}=211$, $\ell_{2}=67$ and
$\ell_{3}=31$. For suitable choices of the places $v_i$ above the primes $\ell_i$, we prove that $G_S$ is a mild pro-$p$-group for $S=\{v_{0},v_1,v_2,v_3\}$.

\begin{notations}
For a pro-$p$-group $G$, we denote by $[h,g]=h^{-1}g^{-1}hg$ the commutator of $g,h\in G$. For a closed subgroup $H$ of $G$, $[H,G]$ (resp. $H^{p^n}$) denote the minimal closed subgroup containing $\{[h,g]\,|\,h\in H, g\in G \}$ (resp. $\{h^{p^n} \,|\, h\in H \}$). Then $G^{ab}=G/[G,G]$. The $i$th cohomology group with coefficients in $\mathbb{F}_p=\mathbb{Z}/p\mathbb{Z}$ is denoted by $H^i(G)=H^i(G, \mathbb{Z}/p\mathbb{Z})$. For a set $Y$, $|Y|$ denote the cardinality. For ojects $x$ and $y$, $\delta_{x,y}=1$ if $x=y$ and $\delta_{x,y}=0$ otherwise.
\end{notations}

\section{\bf Koch type presentation}
Let $G$ be a finitely presented pro-$p$ group and let
\[
\entrymodifiers={+!!<0pt,\fontdimen22\textfont2>}
\xymatrix@R=0.8\baselineskip{
1 \ar[r] & R \ar[r] & F \ar[r] & G \ar[r] & 1
}
\]
be a minimal presentation, where $F$ is the free pro-$p$ group on the generators $x_1,\ldots, x_d$
and $R=\langle \rho_1,\ldots,\rho_r\rangle$ is the normal subgroup of $F$ generated by the elements $\rho_i$,
$i= 1,\ldots, r$.

\begin{defi}
The minimal presentation $\langle x_1,\ldots, x_d \,|\, \rho_1,\ldots,\rho_r \rangle$, of the pro-$p$-group $G$ is said to be of Koch type if $r\leq d$ and the relations $\rho_i$ satisfy a congruence of the form
\[ \rho_i\equiv
x_i^{pa_i}\prod_{i\neq j} [x_j,x_i]^{a_{ij}}
\mod{F_3},
\]
 with $a_i\in \mathbb Z_p$, $a_{ij}\in \mathbb F_p$, and $F_3$ is the third step of the descending $p$-central series of $F$. The group $G$ is of Koch type if it has a presentation of Koch type.
\end{defi}

\noindent Given a Koch type presentation of $G_S$ for a set of places $S$ of a number field $k$, one can derive
conditions for which the group $G_S$ is mild. Here we consider the following setting:

\begin{itemize}
    \item $p>2$ an odd prime number.
    \item $k$ an imaginary quadratic field such that the $\mathbb{F}_p$-rank of the class group $Cl_k$ is $1$.
\end{itemize}
In the following, for each place $v$ of $k$, we denote $k_v$ for the
completion of $k$ at $v$, and $U_v$ is the group of units of $k_v$.
Let $S$ be a finite set of primes of $k$ whose norms is congruent to
1 modulo $p$. We define the group
\begin{equation*}
V_{S}(k)=\{a\in k^{\times}\;|\;a\in (k_{v}^{\times})
^p\;\hbox{for}\; v\in S\;\hbox{and}\;a\in
U_{v}k_{v} \; \hbox{for} \; v\notin
S\}.
\end{equation*}
We can see that $(k^{\times})^p\subset V_{S}(k)$. Thus we can put
\begin{center}
 $B_S(k)=\big(V_{S}(k)/(k^{\times})^p\big)^{\ast}$,
\end{center}
where $(.)^{\ast}$ means the Pontryagin dual. We have the following exact
sequence

\[
  \xymatrix @C=2pc{0\ar@[>][r]& E_{k}/p\ar@[>][r]& V_{\emptyset}(k)/(k^{\times})^p\ar@[>][r]& Cl_k\ar@[>][r]^{p}&Cl_k,}
\]
where $E_k$ is the group of units of $k$.
We are interested in the sets $S$ of places for which
$V_{S}(k)=(k^{\times})^p$. This is intimately related to the structure
of the Galois group $G_S$. More
precisely, it gives information about the generators and relations
rank of $G_S$. Shafarevich's theorem \cite[Theorem
11.8]{Koch}, tells us that the minimal number $d(G_S)$ of generators
of $G_S$ (for our setting) is given by
\begin{eqnarray}\label{generators}
d(G_S)=\dim_{\mathbb{F}_p}H^1(G_S)=|S| + \dim_{\mathbb{F}_p}(B_S(k)).
\end{eqnarray}
Furthermore, Theorem $11.5$ of \cite{Koch} gives that
\begin{equation}\label{relations}
r(G_S)=\dim_{\mathbb{F}_p}H^2(G_S)\leq |S|+\dim_{\mathbb{F}_p}(B_{S}(k)).
\end{equation}

Let $\mathfrak{a}_1\not\in S$ be a prime ideal of $k$ such that $\{[\mathfrak{a}_1] (Cl_{k})^p\}$ is a basis of
$Cl_k/p$. Then
$Cl_k[p]$ is generated by $\{[\mathfrak{a}_{1}^{q_1/p}]\}$, where
$q_1$ denotes the order of $[\mathfrak{a}_1]$. In particular,
$\mathfrak{a}_{1}^{q_{1}}=a_{1}\mathcal{O}_{k}$ for some $a_{1}\in
V_{\emptyset}(k)$. The image of $a_{1}(k^{\times})^ p$, under the
homomorphism $V_{\emptyset}(k)/(k^{\times})^p \rightarrow Cl_k$, is
$ [\mathfrak{a}_{1}^{q_1/p}]$, thus we conclude that
\begin{eqnarray}\label{hyperprimery elements}
\{a_1 (k^{\times})^p\}\; \hbox{generates}\;
V_{\emptyset}(k)/(k^{\times})^p.
\end{eqnarray}

In the following, a set $S$ of places of
$k$ is said to be \textsf{singular} at a finite place $v_0$ if:
\begin{equation*}
v_0\in S\; \hbox{and} \;
a_{1}^{(|\mathcal{O}_{k}/v_0|-1)/p}\not\equiv 1 \pmod {v_0}.
\end{equation*}

\begin{prop}\label{B_S(K)}(\cite{El Habibi-Mizusawa})
Let $S$ be a finite set of places of $k$. Suppose that $S$ is
\textsf{singular} at $v_0$, then $B_S(k)=\{1\}$. It follows that
\begin{enumerate}
    \item  $d(G_S)=|S|$,
    \item  $r(G_S)\leq |S|$.
\end{enumerate}
\end{prop}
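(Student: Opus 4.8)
The plan is to establish the single assertion $B_S(k)=\{1\}$; claims (1) and (2) then follow at once from \eqref{generators} and \eqref{relations} by inserting $\dim_{\mathbb{F}_p}B_S(k)=0$. Since $B_S(k)=\big(V_S(k)/(k^\times)^p\big)^\ast$, it is enough to show that the $\mathbb{F}_p$-vector space $V_S(k)/(k^\times)^p$ vanishes, i.e.\ that $V_S(k)=(k^\times)^p$.

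First I would note the inclusion $V_S(k)\subseteq V_\emptyset(k)$. At a place $v\in S$, the defining condition $a\in(k_v^\times)^p$ for $V_S(k)$ implies $a\in U_v(k_v^\times)^p$, which is precisely the defining condition for $V_\emptyset(k)$ there; at a place $v\notin S$ the two groups are cut out by the same local condition. Hence any $a\in V_S(k)$ lies in $V_\emptyset(k)$, and passing to quotients gives an injection of $\mathbb{F}_p$-vector spaces $V_S(k)/(k^\times)^p\hookrightarrow V_\emptyset(k)/(k^\times)^p$. By \eqref{hyperprimery elements} the target is generated by the single class $a_1(k^\times)^p$, so it has dimension at most $1$; consequently $V_S(k)/(k^\times)^p$ is either $0$ or all of $V_\emptyset(k)/(k^\times)^p$. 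In the latter case $a_1\in V_S(k)$ (up to a $p$-th power, which changes nothing), and since $v_0\in S$ this forces $a_1\in(k_{v_0}^\times)^p$.

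It remains to rule out $a_1\in(k_{v_0}^\times)^p$, and this is where the singularity hypothesis enters. Since $\mathfrak{a}_1\notin S$ while $v_0\in S$ we have $v_0\neq\mathfrak{a}_1$, so the identity of ideals $a_1\mathcal{O}_k=\mathfrak{a}_1^{q_1}$ gives $v_0(a_1)=0$: thus $a_1$ is a $v_0$-adic unit and reduces to a well-defined element of $(\mathcal{O}_k/v_0)^\times$. As $v_0$ is not a $p$-adic place, Hensel's lemma shows that a $v_0$-adic unit is a $p$-th power in $k_{v_0}^\times$ if and only if its reduction is a $p$-th power in the residue field $\mathcal{O}_k/v_0$, whose multiplicative group is cyclic of order $|\mathcal{O}_k/v_0|-1\equiv 0\pmod p$ by our standing hypothesis on $S$. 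Hence $a_1\in(k_{v_0}^\times)^p$ would force $a_1^{(|\mathcal{O}_k/v_0|-1)/p}\equiv 1\pmod{v_0}$, contradicting the fact that $S$ is singular at $v_0$. Therefore $V_S(k)/(k^\times)^p=0$, so $V_S(k)=(k^\times)^p$ and $B_S(k)=\{1\}$; finally \eqref{generators} and \eqref{relations} give $d(G_S)=|S|$ and $r(G_S)\leq|S|$.

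The argument is short and essentially formal. The one point I would handle with care is the comparison between $p$-th powers in $k_{v_0}^\times$ and in the residue field: verifying that $a_1$ is genuinely a $v_0$-adic unit (so the reduction map is defined and Hensel's lemma applies, using that $v_0$ is non-$p$-adic), and confirming that $a_1^{(|\mathcal{O}_k/v_0|-1)/p}\bmod v_0$ in the definition of ``singular'' is exactly the $p$-th-power residue symbol detecting $p$-th powers in $k_{v_0}^\times$. Everything else is one-dimensional linear algebra over $\mathbb{F}_p$ together with the already-recorded facts \eqref{generators}, \eqref{relations}, and \eqref{hyperprimery elements}.
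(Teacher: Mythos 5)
Your proposal is correct and follows essentially the same route as the paper: since $V_\emptyset(k)/(k^\times)^p$ is generated by the class of $a_1$, the subspace $V_S(k)/(k^\times)^p$ is either trivial or forces $a_1\in(k_{v_0}^\times)^p$, which the singularity hypothesis at $v_0$ rules out, and then (1) and (2) follow from \eqref{generators} and \eqref{relations}. The only difference is that you spell out the details the paper leaves implicit (the inclusion $V_S\subseteq V_\emptyset$ and the Hensel's lemma translation of the congruence condition into $a_1\notin(k_{v_0}^\times)^p$), which is a welcome but not substantively different elaboration.
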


\begin{proof}
Since $V_{\emptyset}(k)=a_1^{\mathbb{Z}}(k^{\times})^p$ (see
(\ref{hyperprimery elements})) and $a_1\notin (k_{v_0}^{\times})^ p$ by
assumption, we have $a_1\notin
V_S(k)$, and hence $V_S(k)/(k^{\times})^p=\{1\}$. Thus $\mathrm{B}_S(k)=\{1\}$.\\
The assertion for minimal number of generators and relations of
$G_S$ follows from (\ref{generators}) and (\ref{relations}).
\end{proof}

We would like to give an explicit description of the set of
generators and relations in the group $G_S$ in our setting, while assuming  that $S$ is
\textsf{singular} and disjoint from the set of $p$-adic places of
$k$. For this we will need some local notations:\\
\noindent For any prime $v$ of $k$, we let $\pi_{v}$ be a uniformizer of $v$ and
$Frob_v$ is the corresponding Frobenius
automorphism in the maximal subextension of $k_S/k$ in which
a fixed prime of $k_S$ above $v$ is
unramified.\\
If $\Sigma$ is a set of places of $k$, we put
$$U_{\Sigma}:=\prod_{v\not\in\Sigma}U_{v}\times\prod_{v\in
\Sigma}\{1\}.$$

\noindent We set
$\gamma_{1}=(\gamma_{1,v})_v$ where $\gamma_{1,v}=\pi_{v}$ if
$v=\mathfrak{a}_1$ and $\gamma_{1,v}=1$ otherwise. Then
$\gamma_{1}Uk^{\times}$ correspond to the class $[\mathfrak{a}_1]$
under the isomorphism $J/Uk^{\times}\simeq Cl_k$, where
$U:=U_{\emptyset}$ and $J$ is the group of ideles of $k$. We have in particular
\begin{equation}\label{gamma_1}
\gamma_{1}^{-q_1}a_{1}\in U\;\;\hbox{and}\;\;q_{1}\equiv0\pmod{p}.
\end{equation}

Let $h$ be the order of the non-$p$-part of $Cl_k$. Let $w\in S$,
then $$[w]^{h}=[\mathfrak{a}_{1}]^{l_{w,1}},\;\;
l_{w,1}\in\mathbb{Z}.$$ Thus,
$w^{h}=\mathfrak{a}_{1}^{l_{w,1}}(\varpi_w)$ for some $\varpi_w\in
k^{\times}$. The idele element in $J$ corresponding to $\varpi_w$ is
given by $$(\varpi_{w})_v \in
\pi_{w}^{h}U_{w}\times\pi_{\mathfrak{a}_1}^{-l_{w,1}}U_{\mathfrak{a}_1}\times\prod_{v\not\in\{
w,\mathfrak{a}_1\}}U_{v}.$$

For $v\in S$, let $\alpha_v \in U_v$ be a primitive
$|(\mathcal{O}_{k}/v)^{\times}|$-th root of unity, and consider the
following elements of the group $G_S$:
\begin{itemize}
\item let $\sigma_{v}$ be an element of $G_S$ with
    the properties:
    \begin{enumerate}
\item $\sigma_{v}$ is a lift of
        $Frob_{v}$ to $G_S$,
\item
        $(\sigma_{v}^{h})_{|k_{S}^{ab}}=\left(\varpi_{v},k_{S}^{ab}/k\right)$,
\end{enumerate}
\item let $\tau_{v}$ be an element of $G_S$ such that
\begin{enumerate}
        \item $\tau_{v}$ is an element of the inertia group
        $\mathcal{T}_{v}$ of $v$ in $k_{S}/k$,
        \item
        $(\tau_{v})_{|k_{S}^{ab}}=\left(\alpha_{v},k_{S}^{ab}/k\right)$,
    \end{enumerate}
    \item let $\tau_1$ be a lift of $\left(\gamma_{1},k_{S}^{ab}/k\right)$ to
    $G_S$.
\end{itemize}

\begin{theo}\label{minimal presentation}
Let $S$ be a finite set of places of $k$. Suppose that $S$ is
\textsf{singular} at $v_0$. Then $G_S$ has a minimal presentation
\[
  \xymatrix @C=2pc{1\ar@[>][r]&R\ar@[>][r]&F\ar@[>][r]^{\pi}&G_S\ar@[>][r]&1,}
\]
where $F$ is a free pro-$p$-group with generators
$\{x_{\bf{v}}\}_{\mathbf{v}\in (S\setminus\{v_0\})\cup\{1\}}$ such
that $\pi(x_{\bf{v}})=\tau_{\mathbf{v}}$ for $\mathbf{v}\in
(S\setminus\{v_0\})\cup\{1\}$, and $R$ is a normal subgroup of $F$
normally generated by $\{\rho_{v}\}_{v\in S}$ of the form
\begin{equation}
\rho_{v}=x_{v}^{|\mathcal{O}_{k}/v|-1}[x_{v}^{-1},y_{v}^{-1}],\;\;
v\in S,
\end{equation}
where $y_{v}\in F$ such that $\pi(y_v)=\sigma_v$ and

\begin{equation*}
y_{v}\equiv \prod_{\mathbf{v} \in
(S\setminus\{v_0\})\cup\{1\}}x_{\bf{v}}^{\widetilde{l}_{v,\bf{v}}}\pmod{[F,F]},
\end{equation*}
where, for every $\mathbf{v} \in (S\setminus\{v_0\})\cup\{1\}$,
$\widetilde{l}_{v,\bf{v}}$ is some $p$-adic integer.

\end{theo}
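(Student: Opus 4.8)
The plan is to follow Koch's framework for presenting $G_S$ (see \cite{Koch}) while making every piece of data explicit in the singular setting; the argument splits into three parts — the generators, the relations, and the shape of the Frobenius lifts.

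\emph{The generators.} By Proposition \ref{B_S(K)} the singularity of $S$ at $v_0$ gives $B_S(k)=\{1\}$, so $d(G_S)=\dim_{\mathbb F_p}H^1(G_S)=|S|=\big|(S\setminus\{v_0\})\cup\{1\}\big|$ by (\ref{generators}); hence it suffices to check that the images of the $|S|$ elements $\tau_{\mathbf v}$, $\mathbf v\in(S\setminus\{v_0\})\cup\{1\}$, form an $\mathbb F_p$-basis of $G_S^{ab}\otimes\mathbb F_p$. By class field theory $G_S^{ab}$ is topologically generated by the tame inertia classes $\tau_v$ $(v\in S)$ together with $\tau_1$, the latter generating the cyclic group $Cl_k\otimes\mathbb Z_p$. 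Applying the global Artin map to the principal idele attached to $a_1$ — whose only non-unit component is $\pi_{\mathfrak a_1}^{q_1}$ at $\mathfrak a_1$ and which lies in $U_v$ for every $v\in S$ — and using (\ref{gamma_1}) and $q_1\equiv 0\pmod p$, one obtains exactly one relation in $G_S^{ab}\otimes\mathbb F_p$, of the form $\prod_{v\in S}\tau_v^{e_v}=1$, where $e_v\in\mathbb F_p$ is the class of $a_1$ in $U_v/(U_v)^p\cong\mathbb F_p$ measured against $\alpha_v$. The singularity condition $a_1^{(|\mathcal O_k/v_0|-1)/p}\not\equiv 1\pmod{v_0}$ says precisely that $a_1\notin (U_{v_0})^p$, i.e.\ $e_{v_0}\neq 0$, so this relation eliminates $\tau_{v_0}$ and the remaining family $\{\tau_{\mathbf v}\}_{\mathbf v\in(S\setminus\{v_0\})\cup\{1\}}$, being of cardinality $d(G_S)$, is a minimal generating system. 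Lifting it to a basis $\{x_{\mathbf v}\}$ of a free pro-$p$-group $F$ gives the surjection $\pi$ with $\pi(x_{\mathbf v})=\tau_{\mathbf v}$, and we set $R=\ker\pi$.

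\emph{The relations.} For each $v\in S$ the decomposition group of $v$ in $G_S$ is a pro-$p$-quotient of the tame Galois group of $k_v$, hence topologically generated by the inertia element $\tau_v$ and the Frobenius lift $\sigma_v$ subject to the single tame relation $\sigma_v\tau_v\sigma_v^{-1}=\tau_v^{|\mathcal O_k/v|}$. Choosing $y_v\in F$ with $\pi(y_v)=\sigma_v$ and rewriting this relation shows that $\rho_v:=x_v^{|\mathcal O_k/v|-1}[x_v^{-1},y_v^{-1}]$ lies in $R$; since $|\mathcal O_k/v|\equiv 1\pmod p$ it lies in the Frattini subgroup of $F$, which is what makes the presentation minimal on the relation side. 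It remains to show that the closed normal subgroup of $F$ generated by $\{\rho_v\}_{v\in S}$ equals $R$. Here I invoke Koch's theory: because $B_S(k)=\{1\}$, Koch's description of the relation module of $G_S$ (\cite{Koch}) shows that $R$ is normally generated precisely by the local tame relations at the primes of $S$, i.e.\ by the $\rho_v$, and in particular $\dim_{\mathbb F_p}H^2(G_S)=|S|$. Concretely this rests on Poitou--Tate duality for the pro-$p$-group $G_S$ together with $\mu_p\not\subset k$ (forced by our hypothesis on $Cl_k$, since the only imaginary quadratic field containing $\mu_p$ is $\mathbb Q(\mu_3)$, of class number $1$): as $H^0(k,\mu_p)=0$ and $H^2(k_v,\mathbb F_p)\cong\mathbb F_p$ for each $v\in S$ (because $|\mathcal O_k/v|\equiv1\pmod p$), the localisation map $H^2(G_S)\to\bigoplus_{v\in S}H^2(k_v,\mathbb F_p)$ is an isomorphism onto a space of dimension $|S|$, so dually the classes of the $\rho_v$ form an $\mathbb F_p$-basis of $R\big/R^p[F,R]$ and the claim follows from the topological Nakayama lemma. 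This last point — that the explicit local relations already exhaust $R$ — is the crux of the argument and the only place where genuine global input is indispensable; granting it, the rest is bookkeeping with local and global reciprocity.

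\emph{The Frobenius lifts.} By construction $(\sigma_v^{h})|_{k_S^{ab}}=(\varpi_v,k_S^{ab}/k)$ with $h$ prime to $p$, so the image of $\sigma_v$ in the pro-$p$-group $G_S^{ab}$ is determined by the global Artin symbol of $\varpi_v$. Inserting the idele description of $\varpi_v$ recalled just before the theorem — component $\pi_v^{h}$ times a unit at $v$, component $\pi_{\mathfrak a_1}^{-l_{v,1}}$ times a unit at $\mathfrak a_1$, units elsewhere — and using that a unit idele at a place outside $S$ has trivial Artin symbol in $G_S^{ab}$ while $(\pi_{\mathfrak a_1},k_S^{ab}/k)$ is the image of $\tau_1$, one writes the image of $\sigma_v$ in $G_S^{ab}$ as a $\mathbb Z_p$-combination of the $\tau_v$ $(v\in S)$ and $\tau_1$; substituting the relation $\prod_{v\in S}\tau_v^{e_v}=1$ found above to remove $\tau_{v_0}$ then rewrites it as $\prod_{\mathbf v\in(S\setminus\{v_0\})\cup\{1\}}\tau_{\mathbf v}^{\widetilde l_{v,\mathbf v}}$ with $\widetilde l_{v,\mathbf v}\in\mathbb Z_p$. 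Finally, since the fibre of $\pi$ over $\sigma_v$ and the coset of $[F,F]$ represented by $\prod_{\mathbf v}x_{\mathbf v}^{\widetilde l_{v,\mathbf v}}$ have matching images in $G_S^{ab}$, one may choose $y_v$ in that fibre with $y_v\equiv\prod_{\mathbf v}x_{\mathbf v}^{\widetilde l_{v,\mathbf v}}\pmod{[F,F]}$, which completes the proof.
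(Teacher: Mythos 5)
Your argument is essentially the paper's: the generator count via $B_S(k)=\{1\}$ with $\tau_{v_0}$ eliminated using the singularity of $S$, the local tame relations $\rho_v$ shown to normally generate $R$ through Koch's theory once the localisation map $H^2(G_S)\rightarrow\bigoplus_{v\in S}H^2(G_v)$ is injective, and the congruences for $y_v$ extracted from the Artin symbol of $\varpi_v$ together with the eliminating relation. One caveat: your claim that this localisation map is an \emph{isomorphism} (hence that the classes of the $\rho_v$ form a basis of $R/R^p[F,R]$ and $\dim_{\mathbb F_p}H^2(G_S)=|S|$) is more than is justified here --- Poitou--Tate duality applies to the full Galois group with $S\cup S_p\cup S_\infty$-ramification, not directly to the tame pro-$p$ group $G_S$ --- but only injectivity (which follows from $B_S(k)=\{1\}$, as in \cite[Theorem 6.14]{Koch}) and hence the spanning of the relation module by the $\rho_v$ is actually needed, so the conclusion is unaffected.
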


\begin{proof}
Recall the following exact sequence from class field theory:
\begin{align}\label{eq:VVUGC}
0 \rightarrow V_S/(k^{\times})^p \rightarrow V_{\emptyset}/(k^{\times})^p \rightarrow U/(U)^pU_S \rightarrow (G_S)^{\mathrm{ab}}/p \rightarrow Cl_k/p \rightarrow 0.
\end{align}
It follows that the set $C=\left\{\tau_{1},\tau_{v},v\in S\right\}$ is a system of
generators of $G_{S}$. By Proposition \ref{B_S(K)}, a minimal system of generators has cardinality $|S|$, thus
we have to omit one
element of the set $C$ to get a minimal system of generators.\\
Note that by Hasse principal we have $k^{\times}\cap J^{p}=(k^{\times})^p$. Thus, $U\cap J^{p}=U^{p}$. We get the following isomorphisms
\begin{equation*}
V/(k^{\times})^p\simeq VJ^{p}/J^{p}=(k^{\times}J^{p}\cap
U)J^{p}/J^{p}\simeq (k^{\times}J^{p}\cap U)U^{p}/U^{p},
\end{equation*}
where $V=V_{\emptyset}$. By (\ref{gamma_1}) the maps are given by
\[
  \xymatrix @C=2pc{a_{1}(k^{\times})^p\ar@{|->}[r]& a_{1}J^{ p}=\gamma_{1}^{-q_1} a_{1}J^{p}\ar@{|->}[r]& \gamma_{1}^{-q_1} a_{1}U^{p}.}
\]
Then, the set $\{\gamma_{1}^{-q_1} a_{1}U^{p}U_{S}\}$ is a basis of
the image of the injective homomorphism \[
  \xymatrix @C=2pc{V/(k^{\times})^p\ar@{->}[r]& U/U^{p}U_{S}.}\]

For $v,w\in S$ such that $v\neq w,$ there are $p$-adic integers
$z_{1,v}$, $l_{w,v}$ satisfying
\begin{eqnarray}
\gamma_{1}^{-q_1} a_{1}&\equiv&\alpha_{v}^{z_{1,v}}\pmod{v},\\
\varpi_{w}^{-1}&\equiv&\alpha_{v}^{l_{w,v}}\pmod{v}.
\end{eqnarray}
We put $l_{w,v}=0$ if we have $v=w$.\vskip 6pt

The set $\{\alpha_{v}U^{p}U_{S}\}$ forms a basis of $U/U^{p}U_S$.
Then we have
\begin{equation*}
\prod_{v\in S}\alpha_{v}^{z_{1,v}}\equiv(\gamma_{1,v}^{-q_1}
a_{1})_v\pmod{U^{p}U_S},
\end{equation*}
hence
\begin{equation*}
\prod_{v\in
S}\alpha_{v}^{z_{1,v}}\equiv\gamma_{1}^{-q_1}\pmod{J^{p}k^{\times}U_S}.
\end{equation*}
Then, by class field theory we obtain
\begin{equation}\label{tau-tau}
\prod_{v\in
S}\tau_{v}^{z_{1,v}}\equiv\tau_{1}^{-q_1}\pmod{[G_{S},G_{S}]}.
\end{equation}
Recall that $\gamma_{1,v}=1$ if $v\neq\mathfrak{a}_1$. Since
$\gamma_{1,v}^{-q_{1}}a_{1}=a_{1}\equiv\alpha_{v}^{z_{1,v}}\pmod{v}$
for every $v\in S$, we have $z_{1,v_0}\not\equiv0\pmod{p}$ because
$S$ is singular at $v_0$. Thus a minimal system of generators of
$G_{S}$ is given by $C_{0}=\left\{\tau_{1},\tau_{v},v\in
S\setminus\{v_0\}\right\}$. In particular, using (\ref{tau-tau}) we
have
\begin{equation}\label{tau_v0}
\tau_{v_0}\equiv\tau_{1}^{-q_{1}z_{1,v_0}^{-1}}\prod_{v\in
S\setminus\{v_0\}}\tau_{v}^{-z_{1,v}z_{1,v_0}^{-1}}\pmod{[G_{S},G_{S}]}.
\end{equation}
For any $w\in S$ we have
\begin{eqnarray*}
\varpi_{w}=(\varpi_{w},(1)_{v\neq
w})&\equiv&(1,(\varpi_{w}^{-1})_{v\neq w})\pmod{k^{\times}}\\
&\equiv&(\gamma_{1}^{l_{w,1}})((1)_{v\not\in
S\setminus\{w\}},(\varpi_{w}^{-1})_{v\in
S\setminus\{w\}})\pmod{U_{S}J^{p}k^{\times}}\\
&\equiv&\gamma_{1}^{l_{w,1}}\prod_{v\in
S}\alpha_{v}^{l_{w,v}}\pmod{U_{S}J^{p}k^{\times}}.
\end{eqnarray*}
By class field theory we obtain
\begin{equation}\label{sigma_w^h}
\sigma_{w}^{h}\equiv\tau_{1}^{l_{w,1}}\prod_{v\in
S}\tau_{v}^{l_{w,v}}\pmod{[G_{S},G_S]}.
\end{equation}
Since $h\in\mathbb{Z}\setminus p\mathbb{Z}$, we have from
$(\ref{sigma_w^h})$:
\begin{equation}\label{sigma_w}
\sigma_{w}\equiv\tau_{1}^{l_{w,1}h^{-1}}\prod_{v\in
S}\tau_{v}^{l_{w,v}h^{-1}}\pmod{[G_{S},G_S]},
\end{equation}
where $h^{-1}$ is the inverse of $h$ modulo $p$. Recall that
$C_{0}=\left\{\tau_{1},\tau_{v},v\in S\setminus\{v_0\}\right\}$ is a
minimal system of generators of $G_S$. Let $F$ be a free
pro-$p$-group on $d=|S|$ generators. We define the surjective
homomorphism $\pi : F\rightarrow G_S$ by
$\pi(x_{\bf{v}})=\tau_{\mathbf{v}}$ for $\mathbf{v}\in
(S\setminus\{v_0\})\cup\{1\}$ and let $R$ be $\ker{(\pi)}$. For each
$w\in S$, there exists $y_{w}\in\pi^{-1}(\sigma_{w})$ such that by
(\ref{sigma_w}) we have
\begin{equation*}
y_{w}\equiv x_{1}^{l_{w,1}h^{-1}}\prod_{v \in
S}x_{v}^{l_{w,v}h^{-1}}\pmod{[F,F]}.
\end{equation*}
Moreover, by (\ref{tau_v0}) we have
\begin{equation}
x_{v_0}\equiv x_{1}^{-q_{1}z_{1,v_0}^{-1}}\prod_{v\in
S\setminus\{v_0\}}x_{v}^{-z_{1,v}z_{1,v_0}^{-1}}\pmod{[F,F]}.
\end{equation}
Thus, we find the congruence
\begin{equation*}
y_{w}\equiv \prod_{\mathbf{v} \in
(S\setminus\{v_0\})\cup\{1\}}x_{\bf{v}}^{\widetilde{l}_{w,\bf{v}}}\pmod{[F,F]},
\end{equation*}
where
\begin{equation*}
\widetilde{l}_{w,\bf{v}}=(l_{w,\bf{v}}-z_{1,\bf{v}}z_{1,v_0}^{-1}l_{w,v_0})h^{-1},\;
\hbox{for every}\;\mathbf{v} \in (S\setminus\{v_0\})\cup\{1\}, \;
\hbox{such that}\; z_{1, 1}=q_1.
\end{equation*}

Let $v\in S$, we have a natural homomorphism $G_{v}\rightarrow
G_{S}$ where $G_v$ is the Galois group of the compositum of all
$p$-extensions of $k_v$. Let $F_v$ be a free pro-$p$-group on $2$
generators $s_v$ and $t_v$. Define the homomorphism $\phi_{v} :
F_{v}\rightarrow F$ by $\phi_{v}(s_v)=y_v$ and $\phi_{v}(t_v)=x_v$.
Then we have a surjective homomorphism $F_{v}\rightarrow G_{v}$
whose kernel is the normal subgroup $R_v$ generated by
\begin{equation}
r_{v}=t_{v}^{|\mathcal{O}_{k}/v|-1}[t_{v}^{-1},s_{v}^{-1}].
\end{equation}
We have the following commutative diagram
\[
  \xymatrix@C=2pc{1\ar@[>][r]&R\ar@[>][r]&F\ar@[>][r]^{\pi}&G_S\ar@[>][r]&1\\
  1\ar@[>][r]& R_v \ar@[>][r]\ar@{->}[u]& F_v \ar@[>][r]\ar@{->}[u]^{\phi_{v}}& G_v\ar@[>][r]\ar@{->}[u]&1}
\]

By assumption we have $B_{S}(k)=\{1\}$, the morphism
\[
  \xymatrix@C=2pc{loc : H^{2}(G_S)\ar@[>][r]& \bigoplus_{v\in S} H^{2}(G_v)}
\]
is injective. Then $R$ is generated by $\{\phi_{v}(r_v)\}_{v\in S}$
(see \cite[Theorem 6.14]{Koch}). Which complete the proof of the
theorem.
\end{proof}

\begin{rema}\label{remark V_S}
The vanishing of the quotient $V_S(k)/(k^{\times})^p$ is an important condition in the proof of the above result.
However, this is not always true.
For example if $\mathrm{dim}_{\mathbb{F}_p}(Cl_k/p)>|S|$ then $V_S(k)/(k^{\times})^p\neq\{1\}$. Indeed, if
$V_S(k)/(k^{\times})^p=\{1\}$ then we have an injection

\[
  \xymatrix@C=2pc{ V_{\emptyset}/(k^{\times})^p \ar@[>][r]& U/(U)^pU_S}
\]

However, we have $U/(U)^pU_{S}\cong\prod_{v\in S}U_v/(U_v)^p $ and for any $v\in S$ we have
$\mathrm{dim}_{\mathbb{F}_p}(U_v/(U_v)^p)=1$. Thus, $\mathrm{dim}_{\mathbb{F}_p}(V_{\emptyset}(k)/(k^{\times})^p)=\mathrm{dim}_{\mathbb{F}_p}(Cl_k/p)\leq |S|$, which is absurd by assumption.
\end{rema}

In the following section, we use the above theorem to give sufficient conditions for mildness of the group $G_S$.

\section{\bf Mild pro-$p$-groups}

Let us recall what is Zassenhaus filtration for pro-$p$ groups. Let $G$ be such a group and put $G_{1}=G$ and $G_{n}=[G_{n-1},G]$ for $2\leq n$ recursively.
Then $\{G_n\}_{n\geq1}$ is the lower central series of $G$. Put $G_{(n)}=\{g\in G :\; g-1\in (I_G)^n\}$ for $n\geq1$, where $I_{G}=\mathrm{Ker}(\mathbb{F}_{p}[[G]]\rightarrow\mathbb{F}_p)$ is the augmentation ideal of $\mathbb{F}_{p}[[G]]$. Then $G_{(1)}=G$
and $\{G_{(n)}\}_{n\geq1}$ is the Zassenhaus filtration of $G$.\\
A finitely presented pro-$p$ group $G$ is said to be mild (with respect to the Zassenhaus filtration) when $G$ has a presentation $F/R\cong G$
with a system of relations which make a strongly free sequence in the graded Lie algebra $\mathrm{gr}(F)=\bigoplus_{n\geq1}F_{(n)}/F_{(n+1)}$ (see \cite{Lab06}). In this section we give a condition for mildness of the pro-$p$ groups $G_S=\mathrm{Gal}(k_{S}/k)$, where $S$ is a singular set of places of $k$.\vskip 6pt

Let $S=\{v_{0},...,v_{d-1}\}$ be a finite set of finite places of $k$ such that $|\mathcal{O}_k/v_i|\equiv1\pmod{p}$ for every $i\in\{0,...,d-1\}$. Assume that $S$ is singular at the place $v_{0}$. Let $I:=(S\setminus\{v_{0}\})\cup\{1\}$ and
recall that by Theorem \ref{minimal presentation},  $G_S \simeq F/R$ has generators $\{x_{\mathbf{v}_i}\}_{\mathbf{v}_i\in I}$ and relations $\{\rho_{v_i}\}_{1 \le i \le d-1}$.
For $f \in F$ and a multi-index $(\mathbf{v}_1\mathbf{v}_2 \cdots \mathbf{v}_n)$ with $\mathbf{v}_i \in I$,
we put
\[
\varepsilon_{(\mathbf{v}_1\mathbf{v}_2 \cdots \mathbf{v}_n),p}(f)
=\varepsilon_{\mathbb Z_p[[F]]}\left(\frac{\partial^n f}{
\partial x_{\mathbf{v}_1}\partial x_{\mathbf{v}_2} \cdots \partial x_{\mathbf{v}_d}
}\right) \bmod{p} \in \mathbb F_p ,
\]
which is called mod $p$ Magnus coefficient, where $\varepsilon_{\mathbb Z_p[[F]]} : \mathbb Z_p[[F]] \rightarrow \mathbb Z_p$ is the augmentation map,
and $\frac{\partial}{\partial x_{\mathbf{v}}}$ denotes the pro-$p$ Fox derivative which is a continuous endomorphism of $\mathbb Z_p[[F]]$ (\cite[\S 2]{Iha86}).

\begin{theo}\label{mild}
Let $S$ be \textsf{singular} at $v_0$ and of cardinality $4\leq
d\in2\mathbb{Z}$. Assume that $(S\setminus\{v_0\})\cup\{1\}$ is a
\textsf{circular set}, i.e. there exists a bijection
$\boldsymbol{v}:\mathbb{Z}/d\mathbb{Z}\rightarrow
(S\setminus\{v_0\})\cup\{1\}$ such that $\boldsymbol{v}(0)=1$ and
which satisfies the following conditions:
\begin{enumerate}
    \item $l_{v_{0},\boldsymbol{v}(d-2)}\equiv l_{v_{0},\boldsymbol{v}(2)}\equiv0\pmod{p}$\; \hbox{and}\;
    $z_{1,\boldsymbol{v}(j)}\equiv0\pmod{p}$\;\;\\
    \hbox{for all}\;\;$2\leq j\leq
    d-2$.
    \item
    $l_{\boldsymbol{v}(2i),\boldsymbol{v}(2j)}\equiv0\pmod{p}$\;\;\hbox{for
    any}\; $i,j$.
    \item $z_{1,\boldsymbol{v}(1)}l_{v_{0},1}\prod_{i=1}^{d-1}l_{\boldsymbol{v}(i),\boldsymbol{v}(i+1)}\not\equiv
    z_{1,\boldsymbol{v}(d-1)}l_{v_{0},1}\prod_{i=1}^{d-1}l_{\boldsymbol{v}(i),\boldsymbol{v}(i-1)}\pmod{p}$.
\end{enumerate}
Then $G_S$ is a mild pro-$p$-group of deficiency zero and hence of cohomological
dimension $2$.
\end{theo}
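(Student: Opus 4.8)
The plan is to compute the initial forms of the defining relations $\rho_v$, $v\in S$, in the graded Lie algebra $\mathrm{gr}(F)=\bigoplus_{n\ge1}F_{(n)}/F_{(n+1)}$ attached to the Zassenhaus filtration, and to show that they form a strongly free sequence; mildness — hence $\mathrm{cd}(G_S)=2$ — then follows from Labute's algebraic criterion \cite{Lab06, for11}, while deficiency zero is automatic once we know that the presentation of Theorem~\ref{minimal presentation}, which already exhibits $d=|S|$ generators and $d$ relations, is minimal in both.

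\emph{Step 1: initial forms.} Since $p>2$ and $|\mathcal O_k/v|-1\equiv 0\pmod p$, the power term $x_v^{|\mathcal O_k/v|-1}$ lies in $F^{p}\subseteq F_{(p)}\subseteq F_{(3)}$, so it does not affect the degree~$2$ component. Working in the Magnus algebra $\mathbb F_p\langle\langle \xi_{\mathbf v}\rangle\rangle_{\mathbf v\in I}$ with $x_{\mathbf v}\mapsto 1+\xi_{\mathbf v}+\cdots$, a direct expansion of $[x_v^{-1},y_v^{-1}]=x_vy_vx_v^{-1}y_v^{-1}$ shows that the initial form of $\rho_v$ is the quadratic element $[\,\overline{x_v},\overline{y_v}\,]$, where $\overline{y_v}=\sum_{\mathbf w\in I}\widetilde l_{v,\mathbf w}\,\xi_{\mathbf w}\bmod p$. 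For $v\in S\setminus\{v_0\}$ one has $\overline{x_v}=\xi_v$; for $v=v_0$, since $x_{v_0}$ is not a free generator, one substitutes the congruence $x_{v_0}\equiv x_1^{-q_1 z_{1,v_0}^{-1}}\prod_{w\in S\setminus\{v_0\}}x_w^{-z_{1,w}z_{1,v_0}^{-1}}\pmod{[F,F]}$ from the proof of Theorem~\ref{minimal presentation}, giving $\overline{x_{v_0}}=-z_{1,v_0}^{-1}\big(q_1\xi_1+\sum_{w}z_{1,w}\xi_w\big)\bmod p$; this is nonzero because $S$ is singular at $v_0$ (so $z_{1,v_0}\not\equiv 0$) and, by condition~(3), $(z_{1,\boldsymbol v(1)},z_{1,\boldsymbol v(d-1)})\not\equiv(0,0)$ and $l_{v_0,1}\not\equiv 0$. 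Equivalently, one records the mod~$p$ Magnus coefficients $\varepsilon_{(\mathbf a\mathbf b),p}(\rho_v)$ for $\mathbf a,\mathbf b\in I$, which are the structure constants of these quadratic forms.

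\emph{Step 2: reduction to a non-singular circuit.} Using $\widetilde l_{v,\mathbf w}=(l_{v,\mathbf w}-z_{1,\mathbf w}z_{1,v_0}^{-1}l_{v,v_0})h^{-1}$ together with the vanishings in~(1) and~(2), one checks that in the linking diagram $\Gamma_S(p)$ associated with the quadratic initial forms the only surviving links join the vertices $\boldsymbol v(i)$ and $\boldsymbol v(i\pm1)$ for the cyclic labelling $\boldsymbol v:\mathbb Z/d\mathbb Z\to I$; that is, $\Gamma_S(p)$ is a circuit on the $d$ vertices $\{\xi_{\mathbf v}\}_{\mathbf v\in I}$, the evenness of $d$ and the restriction to even indices in~(2) reflecting the bipartite colouring forced by the antisymmetry of the bracket. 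Condition~(3) then says exactly that the two products of link coefficients obtained by running around the circuit in the two directions — which, after unwinding the substitution for $\overline{x_{v_0}}$ at the vertex $\xi_1=\xi_{\boldsymbol v(0)}$, equal $z_{1,\boldsymbol v(1)}\,l_{v_0,1}\prod_{i=1}^{d-1}l_{\boldsymbol v(i),\boldsymbol v(i+1)}$ and $z_{1,\boldsymbol v(d-1)}\,l_{v_0,1}\prod_{i=1}^{d-1}l_{\boldsymbol v(i),\boldsymbol v(i-1)}$ up to a common unit — are distinct modulo~$p$, i.e. the circuit is non-singular in the sense of \cite{Lab06}.

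\emph{Conclusion and main obstacle.} By Labute's theorem on non-singular circuits, $\{\overline{\rho_v}\}_{v\in S}$ is a strongly free sequence in $\mathrm{gr}(F)$ with respect to the Zassenhaus filtration, so $G_S\cong F/R$ is mild; mild pro-$p$-groups have $\mathrm{cd}(G_S)=2$, and since the $d$ relations are strongly free they form an irredundant set, whence $r(G_S)=d=d(G_S)$ and the deficiency of $G_S$ is zero. The only delicate part is the bookkeeping in Step~2: one must faithfully track the eliminated place $v_0$ — which enters every relation through the linear form $\overline{x_{v_0}}$ — together with the distinguished index $1$, and verify that the arithmetic congruences~(1)--(3) do reduce the a priori ``complete'' linking diagram to a non-singular circuit without any of the initial forms degenerating (collapsing to $0$ or to a higher-degree term).
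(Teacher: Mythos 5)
Your Step 2 is where the argument breaks down. Conditions (1) and (2) only kill the even--even linking numbers $l_{\boldsymbol{v}(2i),\boldsymbol{v}(2j)}$, the two numbers $l_{v_0,\boldsymbol{v}(2)}$, $l_{v_0,\boldsymbol{v}(d-2)}$, and the $z_{1,\boldsymbol{v}(j)}$ for $2\le j\le d-2$. They say nothing about odd--odd links such as $l_{\boldsymbol{v}(1),\boldsymbol{v}(3)}$, nor about mixed-parity links at distance $\ge 3$ such as $l_{\boldsymbol{v}(2),\boldsymbol{v}(5)}$ or $\widetilde{l}_{v_0,\boldsymbol{v}(4)}$. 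Hence the quadratic initial forms may contain brackets $[\xi_{\boldsymbol{v}(i)},\xi_{\boldsymbol{v}(j)}]$ with $j\neq i\pm1$: the hypotheses do \emph{not} reduce the linking diagram to a circuit, and Labute's non-singular circuit theorem (\cite[Definition 1.4]{Lab06} and the accompanying mildness result), which requires the edge set to be \emph{exactly} the $d$-cycle, cannot be invoked. There is a second mismatch: since $q_1\equiv 0\pmod p$, the initial form of $\rho_{v_0}$ is a bracket based at $z_{1,\boldsymbol{v}(1)}\xi_{\boldsymbol{v}(1)}+z_{1,\boldsymbol{v}(d-1)}\xi_{\boldsymbol{v}(d-1)}$ rather than at the vertex $\xi_1$, and its expansion also produces terms like $[\xi_{\boldsymbol{v}(1)},\xi_{\boldsymbol{v}(4)}]$ whenever $\widetilde{l}_{v_0,\boldsymbol{v}(4)}\not\equiv 0$, so even the ``one relation attached to each vertex'' format of the circuit criterion is not available. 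The ``bookkeeping'' you flag as the only delicate point is in fact an unprovable claim under the stated hypotheses.

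The hypotheses are instead calibrated for the weaker cup-product (Labute--Schmidt/Forr\'e/G\"artner) criterion, which is the route the paper takes: decompose $H^1(G_S)=U\oplus V$ with $U$ spanned by the odd-indexed characters $\chi_{\boldsymbol{v}(2i+1)}$ and $V$ by the even-indexed ones $\chi_{\boldsymbol{v}(2i)}$. Conditions (1) and (2), together with $q_1\equiv 0\pmod p$, give precisely $\mathrm{tr}_{\rho_v}(\chi_{\boldsymbol{v}(2i)}\cup\chi_{\boldsymbol{v}(2j)})=0$ for all $v\in S$, i.e.\ $V\cup V=\{0\}$, while leaving the odd--odd and long-range mixed links completely unconstrained (they are harmless for this criterion); condition (3) makes the $d\times d$ matrix $\big(\mathrm{tr}_{\rho_{\boldsymbol{v}(i)}}(\chi_{\boldsymbol{v}(j)}\cup\chi_{\boldsymbol{v}(j+1)})\big)$ nonsingular, which yields $U\cup V=H^2(G_S)$ and hence mildness, $\mathrm{cd}=2$, and deficiency zero. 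Your argument could be repaired either by strengthening the hypotheses to force all non-adjacent linking numbers to vanish (which proves a strictly weaker theorem than the one stated), or by replacing the circuit theorem with this more flexible strong-freeness criterion --- at which point your computation of initial forms and of $\overline{x_{v_0}}$ (which is otherwise sound, apart from the superfluous $q_1\xi_1$ term) essentially reproduces the paper's proof.
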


\begin{proof}
To establish the result we use a criterion based on the cup-product
(see \cite{Gar15}) yields that the pro-$p$-group $G_S$ is mild if the $\mathbb F_p$-vector space $H^1(G_S)$ has a decomposition $H^1(G_S)=U \oplus V$ with the subspaces $U$ and $V$ such that
\begin{center}
    $U\cup V=H^2(G_S)$ \,\,\, and \,\,\, $V\cup V=\{0\}$,
\end{center}
where $U\cup V$ (resp. $V\cup V$) denote the image of $U\otimes V$ (resp. $V\otimes V$) by the cup product
\[
\cup : H^1(G_S) \times H^1(G_S) \rightarrow H^2(G_S).
\]

By Theorem \ref{minimal presentation} above, the
group $G_S$ has a minimal presentation
\[
  \xymatrix @C=2pc{1\ar@[>][r]&R\ar@[>][r]&F\ar@[>][r]^{\pi}&G_S\ar@[>][r]&1,}
\]
which we use to define the trace map associated to $\rho \in R$:
$$\mathrm{tr}_{\rho} : H^2(G_S)
\stackrel{\mathrm{tg}^{-1}}{\longrightarrow} H^1(R)^{F/R}
\rightarrow \mathbb F_p : \psi \mapsto
\mathrm{tg}^{-1}(\psi)(\rho),$$ where $\mathrm{tg}$ is the
transgression isomorphism. The dual group $H^2(G_S)^{\vee}$ is
generated by $\{\mathrm{tr}_{\rho_v}\}_{v \in S}$ as an $\mathbb
F_p$-vector space. Let $\{\chi_{\mathbf{v}}\}_{\mathbf{v} \in
(S\setminus\{v_0\})\cup\{1\}}$ be the dual basis of $H^1(G_S) \simeq
H^1(F)$ such that
$\chi_{\mathbf{v}_1}(\pi(x_{\mathbf{v}_2}))=\delta_{\mathbf{v}_1,\mathbf{v}_2}$.
Note that $4 \le d \in 2\mathbb Z$. Then $H^1(G_S)=U \oplus V$,
where $U=\bigoplus_{i=0}^{(d-2)/2} \mathbb F_p
\chi_{\boldsymbol{v}(2i+1)}$ and $V=\bigoplus_{i=0}^{(d-2)/2}
\mathbb F_p \chi_{\boldsymbol{v}(2i)}$.\\
For $v \in S$
and $\mathbf{v}_1,\mathbf{v}_2 \in (S\setminus\{v_0\})\cup\{1\}$, we have
\begin{alignat*}{2}
\mathrm{tr}_{\rho_v}(\chi_{\mathbf{v}_1} \cup \chi_{\mathbf{v}_2})
&=&&-\varepsilon_{(\mathbf{v}_1 \mathbf{v}_2),p}(\rho_v)
=-\varepsilon_{(\mathbf{v}_1 \mathbf{v}_2),p}([x_v^{-1}, y_v^{-1}]) \\
&=&&-\varepsilon_{(\mathbf{v}_1),p}(x_v)\varepsilon_{(\mathbf{v}_2),p}(y_v)
+\varepsilon_{(\mathbf{v}_2),p}(x_v)\varepsilon_{(\mathbf{v}_1),p}(y_v) \\
&=&&-\delta_{v,\mathbf{v}_1}\varepsilon_{(\mathbf{v}_2),p}(y_{\mathbf{v}_1})
+\delta_{v,\mathbf{v}_2}\varepsilon_{(\mathbf{v}_1),p}(y_{\mathbf{v}_2}) \\
& &&+\delta_{v,v_0}\big(
-\varepsilon_{(\mathbf{v}_1),p}(x_{v_0})\varepsilon_{(\mathbf{v}_2),p}(y_{v_0})
+\varepsilon_{(\mathbf{v}_2),p}(x_{v_0})\varepsilon_{(\mathbf{v}_1),p}(y_{v_0})
\big)\\
&=&&-\delta_{v,\mathbf{v}_1}\widetilde{l}_{\mathbf{v}_1,\mathbf{v}_2}
+\delta_{v,\mathbf{v}_2}\widetilde{l}_{\mathbf{v}_2,\mathbf{v}_1}
+\delta_{v,v_0}\big(
z_{1,\mathbf{v}_1}\widetilde{l}_{v_0,\mathbf{v}_2}
-z_{1,\mathbf{v}_2}\widetilde{l}_{v_0,\mathbf{v}_1}
\big)z_{1,v_0}^{-1}
\end{alignat*}
by the basic properties of mod $p$ Magnus coefficients (cf. e.g., \cite[Theorem 2.4]{Gar14} and \cite[Proposition
2.18]{Vog05}).\vskip 6pt
\noindent By (1) and (2), we obtain
$\mathrm{tr}_{\rho_v}(\chi_{\boldsymbol{v}(2i)} \cup
\chi_{\boldsymbol{v}(2j)})=0$ for any $i,j,v$. Since the pairing
$$ H^2(G_S) \times
H^2(G_S)^{\vee} \rightarrow \mathbb{F}_p$$ is non-degenerate, $V
\cup V=\{0\}$.\vskip 6pt

Put a $d \times d$ matrix $\mathbf{A}=(a_{i,j})_{0
\le i,j \le d-1}$ with entries in $\mathbb F_p$ such that
\begin{align*}
a_{i,j}=\left\{\begin{array}{ll}
\mathrm{tr}_{\rho_{\boldsymbol{v}(i)}}(\chi_{\boldsymbol{v}(j)} \cup
\chi_{\boldsymbol{v}(j+1)})=-\delta_{\boldsymbol{v}(i),\boldsymbol{v}(j)}\widetilde{l}_{\boldsymbol{v}(j),\boldsymbol{v}(j+1)}
+\delta_{\boldsymbol{v}(i),\boldsymbol{v}(j+1)}\widetilde{l}_{\boldsymbol{v}(j+1),\boldsymbol{v}(j)}
& \hbox{if $i \neq 0$,}
\\
\mathrm{tr}_{\rho_{v_0}}(\chi_{\boldsymbol{v}(j)} \cup
\chi_{\boldsymbol{v}(j+1)})\;=\big(
z_{1,\boldsymbol{v}(j)}\widetilde{l}_{v_0,\boldsymbol{v}(j+1)}
-z_{1,\boldsymbol{v}(j+1)}\widetilde{l}_{v_0,\boldsymbol{v}(j)}
\big)z_{1,v_0}^{-1} & \hbox{if $i=0$.}
\end{array}\right.
\end{align*}

Suppose that $\sum_{j=0}^{d-1} c_j (\chi_{\boldsymbol{v}(j)} \cup
\chi_{\boldsymbol{v}(j+1)}) =\boldsymbol{0}$ in $U \cup V$. Since
\begin{align*}
a_{i,j}=\left\{\begin{array}{cl}
\mathrm{tr}_{\rho_{\boldsymbol{v}(i)}}(\chi_{\boldsymbol{v}(j)} \cup
\chi_{\boldsymbol{v}(j+1)}) & \hbox{if $1 \le i \le d-1$,}
\\
\mathrm{tr}_{\rho_{v_0}}(\chi_{\boldsymbol{v}(j)} \cup
\chi_{\boldsymbol{v}(j+1)}) & \hbox{if $i=0$,}
\end{array}\right.
\end{align*}
Under the conditions $(1)$ and $(3)$ we have
\begin{equation*}
\mathbf{A} =(a_{i,j})_{i,j}=h^{-d}\left(
\begin{array}{ccccc}
-z_{1,\boldsymbol{v}(1)}l_{v_{0},1}z_{1,v_{0}}^{-1}&    &    &    &  z_{1,\boldsymbol{v}(d-1)}l_{v_{0},1}z_{1,v_{0}}^{-1} \\
l_{\boldsymbol{v}(1),1} & -l_{\boldsymbol{v}(1),\boldsymbol{v}(2)} &   & & \\
        & l_{\boldsymbol{v}(2),\boldsymbol{v}(1)} & \ddots  &   &  \\
     &   &    \ddots     & -l_{\boldsymbol{v}(d-2),\boldsymbol{v}(d-1)}  \\
        &     &    & l_{\boldsymbol{v}(d-1),\boldsymbol{v}(d-2)}& -l_{\boldsymbol{v}(d-1),1}
\end{array}
\right)
\end{equation*}
and
$$\det{(\mathbf{A})}=z_{1,v_{0}}^{-1}h^{-d}\left(z_{1,\boldsymbol{v}(1)}l_{v_{0},1}\prod_{i=1}^{d-1}l_{\boldsymbol{v}(i),\boldsymbol{v}(i+1)}-
    z_{1,\boldsymbol{v}(d-1)}l_{v_{0},1}\prod_{i=1}^{d-1}l_{\boldsymbol{v}(i),\boldsymbol{v}(i-1)}\right).$$
By the condition (3), we obtain $\det{(\mathbf{A})}\not\equiv0\pmod{p}$.

We have
\[
(0,0,\cdots,0)=(\mathrm{tr}_{\rho_{v_0}}(\boldsymbol{0}),\mathrm{tr}_{\rho_{\boldsymbol{v}(1)}}(\boldsymbol{0}),\cdots,\mathrm{tr}_{\rho_{\boldsymbol{v}(d-1)}}
(\boldsymbol{0}))=(c_0,c_1,\cdots,c_{d-1})\,{^t\!\mathbf{A}},
\]
where ${^t\!\mathbf{A}}$ is the transpose of $\mathbf{A}$. Since
$\{\chi_{\boldsymbol{v}(j)} \cup \chi_{\boldsymbol{v}(j+1)}\}_{0 \le
j \le d-1}$ is linearly independent, then $U \cup V=H^2(G_S)$ and
Therefore $G_S$ is a mild pro-$p$ group. In particular, $\mathrm{cd}(G_S)=2$.
\end{proof}

\begin{rema}
The approach of this paper computes the cup product using Koch theory of minimal presentations
and $\pmod{p}$ Magnus coefficients. We note that another approach due to A. Schmidt gives a cup-product
criterion by computing local components of the cup-product (see Section $5$ of \cite{Sch10}).
\end{rema}

\begin{defi}
A pro-$p$-group $G$ is \textbf{fab} if for every open subgroup
$H\subseteq G$ the abelianization $H^{ab}=H/[H,H]$ is finite. We
call $G$ fabulous if it is mild and fab.
\end{defi}
\noindent Note that Labute gives a slightly different definition of what is a
fabulous pro-$p$-group, where the group must satisfy an additional
property called \textsl{quadratic} (see \cite[Definition 7]{Lab08}).

\begin{coro}
Under the conditions of Theorem \ref{mild}, the pro-$p$-group
$G_{S}$ satisfies the following properties:
\begin{enumerate}
    \item $G_S$ is fabulous.
    \item $G_S$ is a duality group.
    \item The strict cohomological dimension is $scd(G_S)=3$.
    \item $G_S$ is not $p$-adic analytic.
    \item $\chi(G_S)=1$ the Euler characteristic of $G_S$.
\end{enumerate}
\end{coro}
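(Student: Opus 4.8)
The plan is to derive all five properties from the single fact, established in Theorem~\ref{mild}, that $G_S$ is a mild pro-$p$-group of deficiency zero (equivalently, $d(G_S)=r(G_S)=|S|$ and $\mathrm{cd}(G_S)=2$). First I would record the numerical invariants: by Proposition~\ref{B_S(K)} and Theorem~\ref{minimal presentation} we have $d(G_S)=\dim_{\mathbb F_p}H^1(G_S)=|S|$ and $r(G_S)=\dim_{\mathbb F_p}H^2(G_S)=|S|$, while $H^0(G_S)=\mathbb F_p$ and $H^i(G_S)=0$ for $i\geq 3$ since $\mathrm{cd}(G_S)=2$. Hence the Euler characteristic is
\[
\chi(G_S)=\sum_{i\geq 0}(-1)^i\dim_{\mathbb F_p}H^i(G_S)=1-|S|+|S|=1,
\]
which gives (5). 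For (1), $G_S$ is mild by Theorem~\ref{mild}; that it is fab follows from the Golod--Shafarevich theory quoted in the introduction, namely that $G_S$ is a fab pro-$p$-group whenever $S$ is disjoint from the $p$-adic places (here every open subgroup corresponds to a finite subextension with again restricted ramification away from $p$, so its abelianization is finite by class field theory). Combining mild and fab gives fabulous.

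For (2), I would invoke Labute's theorem that a mild pro-$p$-group is a Poincaré duality group of dimension $2$ at the prime $p$ in the sense of duality groups; more precisely, the algebraic criterion for mildness (strong freeness of the relation sequence in $\mathrm{gr}(F)$) forces the relation module to be a one-relator-type situation making $G_S$ a duality group of dimension $\mathrm{cd}(G_S)=2$, with dualizing module the appropriate $\mathbb Z_p$-module. This is exactly the conclusion in \cite{Lab06} and is reproved in \cite{for11}; I would cite these. Property (3) then follows from the general fact (cf. \cite{NSW}) that for a duality group $G$ of finite cohomological dimension with $\mathrm{cd}(G)<\mathrm{scd}(G)$, one has $\mathrm{scd}(G)=\mathrm{cd}(G)+1$; since $G_S$ is infinite, $\mathrm{scd}(G_S)=2$ is impossible, so $\mathrm{scd}(G_S)=3$. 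Finally (4): a $p$-adic analytic pro-$p$-group has finite cohomological dimension equal to its dimension as a $p$-adic Lie group and, being a Poincaré duality group, would have to be such; but an infinite $p$-adic analytic fab pro-$p$-group cannot exist—indeed a $p$-adic analytic group has an open uniform (hence torsion-free, powerful) subgroup whose abelianization is infinite, contradicting fab. Hence $G_S$ is not $p$-adic analytic.

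The steps are largely formal once mildness and deficiency zero are in hand, so there is no single hard computation; the main point requiring care is the logical bookkeeping in (2) and (3)—namely quoting the precise form of Labute's duality result for mild groups and the relation $\mathrm{scd}=\mathrm{cd}+1$ for non-trivial duality groups—and in (4), making sure the fab property genuinely obstructs $p$-adic analyticity. I expect the fab-obstruction argument in (4) to be the most delicate to phrase correctly, since one must rule out the trivial group (excluded because $G_S$ is infinite by Golod--Shafarevich under the singularity hypothesis, which forces enough generators) and then use that an infinite $p$-adic analytic pro-$p$-group always has an open subgroup with infinite abelianization. All other implications reduce to citing \cite{Lab06}, \cite{for11}, and \cite{NSW}.
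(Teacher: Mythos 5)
Your handling of (1) and (5) is fine and matches the paper in substance: fab holds because $S$ is disjoint from the $p$-adic places, so $k_S/k$ and its finite subextensions admit no $\mathbb{Z}_p$-extensions and every open subgroup has finite abelianization; and $\chi(G_S)=1-d+r=1$ since $d(G_S)=r(G_S)=|S|$ and $\mathrm{cd}(G_S)=2$. The paper settles (2)--(4) by citation -- duality and $\mathrm{scd}(G_S)=3$ from Proposition 1.3 of \cite{Wing07}, non-analyticity from \cite[Theorem 2.7]{Gar14} -- whereas your substitute arguments for these three items contain genuine errors.

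The worst is (4): it is simply false that an infinite fab $p$-adic analytic pro-$p$-group cannot exist. Open uniform pro-$p$ subgroups $U$ of $\mathrm{SL}_2(\mathbb{Z}_p)$ are fab, because $\mathfrak{sl}_2$ is perfect, so $[U,U]$ is open and all open subgroups have finite abelianization; the existence of such groups is precisely why the tame Fontaine--Mazur conjecture recalled in the introduction is a nontrivial statement. So your claim that a uniform subgroup always has infinite abelianization is wrong, and fabness gives no obstruction to analyticity at all; one must use mildness itself (for instance via Lazard: a torsion-free analytic pro-$p$-group with $\mathrm{cd}=2$ would be a Poincar\'e duality group of dimension $2$, i.e.\ Demushkin, hence defined by a single relation, contradicting $r=d=|S|\geq 4$; this, or a growth estimate on the graded algebra, is the content of the cited \cite[Theorem 2.7]{Gar14}). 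In (2), ``mild $\Rightarrow$ duality group'' is not what \cite{Lab06} or \cite{for11} prove (they give $\mathrm{cd}=2$ and the structure of the relation module), and it is false as an unconditional statement: the free pro-$p$ product $(\mathbb{Z}_p\times\mathbb{Z}_p)\ast\mathbb{Z}_p$ is mild (one strongly free quadratic relation $[x,y]$), yet a nontrivial free pro-$p$ product of infinite groups has $H^1(G,\mathbb{Z}_p[[G]])\neq 0$ and so is not a duality group. The fab property from (1) is an essential hypothesis, and the correct statement (a mild fab group of deficiency zero is a duality group of dimension $2$ -- not a Poincar\'e duality group, as you assert, since that would force a one-relator Demushkin group) is exactly what \cite[Proposition 1.3]{Wing07} supplies. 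Finally, in (3) the bound $\mathrm{scd}\leq\mathrm{cd}+1$ is automatic, so the entire content is to exclude $\mathrm{scd}(G_S)=2$; ``$G_S$ is infinite'' does not do this ($\mathbb{Z}_p$ is infinite with $\mathrm{scd}=2$), so this step is unproved in your write-up and again rests on the result the paper cites.
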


\begin{proof}

For the property (1), since $S$ is disjoint from the set $S_p$ of
places lying above $p$, the extension $k_{S}/k$ does not contain any
$\mathbb{Z}_p$ extensions, thus $G_{S}^{ab}$ is finite, hence
$G_{S}$ is fabulous.\\
The second and the third properties follows from Proposition $1.3$
of \cite{Wing07}, and $(4)$ follows from \cite[Theorem 2.7]{Gar14}.
\end{proof}

Let $S$ be a finite set of places of $k$ of cardinality $4$ such
that $S=\{v_{0},v_1,v_3,v_4\}$ where each $v_i$ is lying over a
prime number $\ell_i$ satisfying $\ell_{i}\equiv1\pmod{p}$.

\begin{prop}\label{4 generators mildness}
The group $G_S$ is a mild pro-$p$-group with $4$ generators if the
following conditions are satisfied:
\begin{enumerate}
\item  The places $v_0$, $v_1$ and $v_3$ have degree
one, and $\ell_2$ is inert in $k$.

\item $a_{1}^{\frac{\ell_{0}-1}{p}}\not\equiv1\pmod{v_0}$. 

\item
$\varpi_{v_0}^{\frac{\ell_{2}^{2}-1}{p}}\equiv1\pmod{\ell_2}$ and
$a_{1}^{\frac{\ell_{2}^{2}-1}{p}}\equiv1\pmod{\ell_2}$.

\item $a_{1}^{\frac{\ell_{1}-1}{p}}\not\equiv1\pmod{v_1}$,
$\varpi_{v_1}^{\frac{\ell_{2}^{2}-1}{p}}\not\equiv1\pmod{\ell_2}$
and $\varpi_{v_{2}}^{\frac{\ell_{3}-1}{p}}\not\equiv1\pmod{v_3}$.

\item In the $p$-Hilbert class field of
$k$, $v_0$ and $v_3$ are inert, and $v_1$ is split.
\end{enumerate}
\end{prop}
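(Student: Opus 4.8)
The plan is to deduce Proposition \ref{4 generators mildness} from Theorem \ref{mild} by verifying that the hypotheses (1)--(5) listed here force $(S\setminus\{v_0\})\cup\{1\}$ to be a \textsf{circular set} of cardinality $4$ and that $S$ is \textsf{singular} at $v_0$, with the three numerical conditions of Theorem \ref{mild} holding for a suitable cyclic ordering. First I would fix the bijection $\boldsymbol{v}:\mathbb Z/4\mathbb Z\to(S\setminus\{v_0\})\cup\{1\}$ by setting $\boldsymbol{v}(0)=1$, $\boldsymbol{v}(1)=v_1$, $\boldsymbol{v}(2)=v_2$, $\boldsymbol{v}(3)=v_3$; so the four generators of $G_S$ are $x_1,x_{v_1},x_{v_2},x_{v_3}$, matching the claimed generator count (which already uses $d(G_S)=|S|=4$ via Proposition \ref{B_S(K)} once singularity at $v_0$ is established). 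Condition (2) here is literally the statement that $a_1^{(|\mathcal O_k/v_0|-1)/p}=a_1^{(\ell_0-1)/p}\not\equiv 1\pmod{v_0}$, i.e.\ $S$ is singular at $v_0$, which is exactly what is needed to invoke Theorems \ref{minimal presentation} and \ref{mild}.

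Next I would translate each arithmetic congruence into the linking-number language of Theorems \ref{minimal presentation} and \ref{mild}. Recall that $z_{1,v}$ is defined by $\gamma_1^{-q_1}a_1\equiv\alpha_v^{z_{1,v}}\pmod v$ and $l_{w,v}$ by $\varpi_w^{-1}\equiv\alpha_v^{l_{w,v}}\pmod v$, and that $\alpha_v$ has order $|\mathcal O_k/v|-1$; hence $z_{1,v}\equiv 0\pmod p$ iff $(\gamma_1^{-q_1}a_1)^{(|\mathcal O_k/v|-1)/p}\equiv 1\pmod v$, and since $\gamma_{1,v}=1$ for $v\neq\mathfrak a_1$ this is $a_1^{(N v-1)/p}\equiv 1\pmod v$ whenever $v\neq\mathfrak a_1$ (which holds for all $v\in S$ after possibly enlarging the auxiliary prime $\mathfrak a_1$ away from $S$). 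Similarly $l_{w,v}\equiv 0\pmod p$ iff $\varpi_w^{(Nv-1)/p}\equiv 1\pmod v$. With these dictionaries: condition (3) gives $z_{1,v_2}\equiv 0$ (from $a_1^{(\ell_2^2-1)/p}\equiv 1$) and $l_{v_0,v_2}\equiv 0$ (from $\varpi_{v_0}^{(\ell_2^2-1)/p}\equiv 1$), and condition (1) ($\ell_2$ inert, the others of degree one) makes $N v_2=\ell_2^2$ while $N v_0=N v_1=N v_3=\ell_i$; condition (4) gives $l_{v_0,v_2}$-type nonvanishing statements $l_{v_1,v_2}\not\equiv 0$ and $l_{v_2,v_3}\not\equiv 0$, plus $z_{1,v_1}\not\equiv 0$ from $a_1^{(\ell_1-1)/p}\not\equiv 1$; and condition (5) controls the splitting of $v_0,v_1,v_3$ in the $p$-Hilbert class field, i.e.\ the residues of $a_1$ (equivalently the $z_{1,v}$) at those primes, giving among other things $z_{1,v_0}\not\equiv 0$ (consistent with singularity) and the needed non-triviality/triviality of the remaining $z_{1,v}$.

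Then I would check the three conditions of Theorem \ref{mild} with $d=4$. Condition (1) of Theorem \ref{mild} requires $l_{v_0,\boldsymbol v(2)}\equiv l_{v_0,\boldsymbol v(2)}\equiv 0$ (here $d-2=2$, so it is the single requirement $l_{v_0,v_2}\equiv 0$) and $z_{1,\boldsymbol v(j)}\equiv 0$ for $2\le j\le d-2$, i.e.\ $z_{1,v_2}\equiv 0$; both come from condition (3) as above. Condition (2) of Theorem \ref{mild} requires $l_{\boldsymbol v(2i),\boldsymbol v(2j)}\equiv 0$ for all $i,j$; with $d=4$ the even indices are $0$ and $2$, so this is $l_{1,1}=0$ (trivially, as $l_{w,w}=0$), $l_{1,v_2}$, $l_{v_2,1}$, $l_{v_2,v_2}=0$ — here I would note that $\boldsymbol v(0)=1$ is not a place of $S$, so the relations indexed by it are vacuous, and the only genuine constraint is that the ``$v_2$--$v_2$'' linking vanishes, which is automatic; I should double-check exactly which of these symbols are defined and which are conventionally zero, using the definition of the matrix $\mathbf A$ in the proof of Theorem \ref{mild}, where only $i\in\{1,\dots,d-1\}$ index genuine relations $\rho_{\boldsymbol v(i)}$ and $i=0$ indexes $\rho_{v_0}$. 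Finally, condition (3) of Theorem \ref{mild} is $z_{1,v_1}l_{v_0,1}\,l_{v_1,v_2}l_{v_2,v_3}l_{v_3,1}\not\equiv z_{1,v_3}l_{v_0,1}\,l_{v_1,1}l_{v_2,v_1}l_{v_3,v_2}\pmod p$; after imposing the vanishing already obtained, several of these factors are controlled by (4) and (5), and I would argue that (4)--(5) are precisely calibrated so that the left side is a product of nonzero residues while enough of the right side vanishes (or the two genuinely differ), making the inequality automatic.

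The main obstacle I anticipate is bookkeeping rather than conceptual: matching the somewhat terse conditions (1)--(5) — which mix ``degree one vs.\ inert'', power-residue symbols of $a_1$ and of the $\varpi_{v_i}$, and splitting behaviour in the $p$-Hilbert class field — against the exact definitions of $z_{1,v}$, $l_{w,v}$, $\widetilde l_{w,v}$ and the normalization by $h^{-1}$ and $z_{1,v_0}^{-1}$ in Theorem \ref{minimal presentation}, and in particular pinning down which symbols with the index $\mathbf v(0)=1$ are vacuous. There is also a small subtlety in that condition (4) mentions $\varpi_{v_2}^{(\ell_3-1)/p}\not\equiv 1\pmod{v_3}$, i.e.\ $l_{v_2,v_3}\not\equiv 0$, so $v_2\notin S\setminus\{v_0\}$ is nonetheless allowed to appear as a ``$w$'' index in $\sigma_w$-relations; I would make sure the indices $w$ (running over all of $S$, including $v_0$) and $\mathbf v$ (running over $(S\setminus\{v_0\})\cup\{1\}$) are kept distinct throughout. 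Once the dictionary is laid out cleanly, the proof is a direct substitution into Theorem \ref{mild}, and I would close by remarking that the resulting $G_S$ then enjoys all the properties of Corollary $3.3$ (fabulous, duality group, $scd=3$, not $p$-adic analytic, $\chi(G_S)=1$).
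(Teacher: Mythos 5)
Your overall strategy is the same as the paper's (take $\boldsymbol v(0)=1$, $\boldsymbol v(i)=v_i$, check the three hypotheses of Theorem \ref{mild}), and your translations of conditions (2), (3) and (4) into the symbols $z_{1,v}$, $l_{w,v}$ are correct. But there are two genuine gaps where your dictionary breaks down, and they are exactly the points the paper's proof turns on. First, your reading of condition (5) is wrong: the splitting behaviour of a place $v$ in the $p$-Hilbert class field is governed by the Frobenius of $v$ there, i.e.\ by the image of the class $[v]$ in the $p$-part of $Cl_k$, hence (via $[v]^h=[\mathfrak a_1]^{l_{v,1}}$) by $l_{v,1}\bmod p$ --- not by the residue of $a_1$ modulo $v$, i.e.\ not by $z_{1,v}$ (that quantity is controlled by whether $a_1$ is a $p$-th power mod $v$, a Kummer-type condition, which is what conditions (2)--(4) address). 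The facts you actually need from (5) are $l_{v_1,1}\equiv 0$, $l_{v_0,1}\not\equiv 0$ and $l_{v_3,1}\not\equiv 0 \pmod p$; with these, the right-hand side of condition (3) of Theorem \ref{mild}, namely $z_{1,v_3}\,l_{v_0,1}\,l_{v_1,1}\,l_{v_2,v_1}\,l_{v_3,v_2}$, vanishes because of $l_{v_1,1}\equiv 0$, while the left-hand side $z_{1,v_1}\,l_{v_0,1}\,l_{v_1,v_2}\,l_{v_2,v_3}\,l_{v_3,1}$ is a product of units mod $p$ (using (4) for $z_{1,v_1}$, $l_{v_1,v_2}$, $l_{v_2,v_3}$ and (5) for $l_{v_0,1}$, $l_{v_3,1}$). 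Since you attributed (5) to the $z_{1,v}$'s, your verification of condition (3) remains the vague claim that the hypotheses are ``precisely calibrated'', i.e.\ it is not actually proved. (Also, $z_{1,v_0}\not\equiv 0$ comes from condition (2), the singularity, not from (5).)

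Second, condition (2) of Theorem \ref{mild} is not vacuous for this ordering: the even-indexed places are $\boldsymbol v(0)=1$ and $\boldsymbol v(2)=v_2$, and while the symbols with first index $1$ are indeed undefined and $l_{v_2,v_2}=0$ by convention, the entry $l_{v_2,1}$ is a genuine linking number (defined by $[v_2]^h=[\mathfrak a_1]^{l_{v_2,1}}$) and must be shown to vanish mod $p$. This is where the inertness of $\ell_2$ is really used: $v_2=\ell_2\mathcal O_k$ is principal, so one may take $\varpi_{v_2}=\ell_2$ and $l_{v_2,1}=0$. In your write-up the inertness of $\ell_2$ only serves to make $Nv_2=\ell_2^2$, and you dismiss the $(v_2,1)$-entry as automatic, so this hypothesis of Theorem \ref{mild} is left unverified. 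Once these two points are repaired --- (5) read as statements about $l_{v_0,1}$, $l_{v_1,1}$, $l_{v_3,1}$, and inertness of $\ell_2$ used to kill $l_{v_2,1}$ --- your argument coincides with the paper's proof.
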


\begin{proof}
Let $\boldsymbol{v}:\mathbb{Z}/4\mathbb{Z}\rightarrow
(S\setminus\{v_0\})\cup\{1\}$ be a bijection such that
$\boldsymbol{v}(0)=1$ and $\boldsymbol{v}(i)=v_i$ for every
$i=1,2,3$. The property $(2)$ gives that $S$ is singular at $v_0$.
By $(3)$ we obtain $l_{v_{0},v_2}\equiv0\pmod{p}$ and
$z_{1,v_2}\equiv0\pmod{p}$. Further, since $\ell_2$ is inert, we
have $l_{v_{2},1}\equiv0\pmod{p}$. The place $v_1$ splits in the
$p$-Hilbert class field of $k$, then $l_{v_{1},1}\equiv0\pmod{p}$.
Thus we have
$$z_{1,\boldsymbol{v}(3)}l_{v_{0},1}\prod_{i=1}^{3}l_{\boldsymbol{v}(i),\boldsymbol{v}(i-1)}\equiv0\pmod{p}.$$
Moreover, by the properties $(4)$ and the fact that $v_0$ and $v_3$
are inert in the $p$-Hilbert class field of $k$,
$$z_{1,\boldsymbol{v}(1)}l_{v_{0},1}\prod_{i=1}^{3}l_{\boldsymbol{v}(i),\boldsymbol{v}(i+1)}\not\equiv0\pmod{p}.$$
Then Theorem \ref{mild} gives that $G_S$ is a mild pro-$p$-group.
\end{proof}

We end this section with the following computations performed using PARI/GP \cite{pari}.

\begin{exam}
Let $k$ be the quadratic field $\mathbb{Q}(\sqrt{-23})$ and $p=3$.
We consider the primes $\ell_{0}=13$, $\ell_{1}=211$, $\ell_{2}=67$ and
$\ell_{3}=31$. Let $\mathcal{O}_k$ be the ring of integers of $k$.
We have $Cl_{k}/p=\langle[\mathfrak{a}_{1}]\rangle$ where
$\mathfrak{a}_{1}=3\mathcal{O}_{k}+(\sqrt{-23}-1)\mathcal{O}_{k}$
and $v_{0}=13\mathcal{O}_{k}+(\sqrt{-23}-4)\mathcal{O}_{k}$ a prime
ideal over $\ell_0$. We set $v_{1}=\varpi_{v_1}\mathcal{O}_{k}$ a
prime ideal above $\ell_1$ such that $\varpi_{v_1}=-2+3\sqrt{-23}$
and we put $v_{2}=\ell_{2}\mathcal{O}_{k}$. We have
$a_{1}=-2-\sqrt{-23}$ such that
$\mathfrak{a}_{1}^{q_1}=a_{1}\mathcal{O}_{k}$ and we obtain
$a_{1}^{\frac{\ell_{0}-1}{p}}\equiv9\pmod{v_0}$,
$a_{1}^{\frac{\ell_{2}^{2}-1}{p}}\equiv1\pmod{\ell_2}$,
$a_{1}^{\frac{\ell_{1}-1}{p}}\equiv14\pmod{v_1}$. Moreover,
$\varpi_{v_0}^{\frac{\ell_{2}^{2}-1}{p}}\equiv1\pmod{\ell_2}$,
$\varpi_{v_1}^{\frac{\ell_{2}^{2}-1}{p}}\equiv37\pmod{\ell_2}$ and
$\varpi_{v_{2}}^{\frac{\ell_{3}-1}{p}}\equiv5\pmod{v_3}$, where
$v_{3}=31\mathcal{O}_{k}+(\sqrt{-23}-15)\mathcal{O}_{k}$ and
$\varpi_{v_{2}}=\ell_2$. Then Proposition \ref{4 generators
mildness} gives that $G_S$ is mild for $S=\{v_{0},v_1,v_2,v_3\}$, hence it is of cohomological dimension $2$.
\end{exam}

\textbf{Further comments}\\
If the linking numbers $\widetilde{l}_{*,*}$ in Theorem \ref{minimal presentation} all vanishes modulo $p$, then $F_{(3)}$ contains the relations module $R$. Thus the cup product is trivial. To study fabulous and mild pro-$p$-groups we need to compute relations modulo $F_{(4)}$.
Examples are known for the field of rationals $\mathbb{Q}$ by the work of Vogel, G\"artner and Maire (e.g. \cite{Vog05}, \cite{Gar14}, \cite{Maire14}). It would be of interest to extand these results to imaginary quadratic fields with non trivial $p$-class group.

\vspace*{15pt}
\begin{acknowledgements}
The authors thank Yasushi Mizusawa for reading an earlier version of this paper and for his valuable comments and suggestions.
We also thank Christian Maire for his interest in this work.
\end{acknowledgements}


\begin{reference}

\bibitem{Bush-Labute} M.R. Bush, and J. Labute. Mild pro-p-groups with 4 generators. Journal of Algebra 308.2 (2007): 828-839.

\bibitem{El Habibi-Mizusawa} A. El Habibi and Y. Mizusawa,
On pro-$p$-extensions of number fields with restricted ramification over intermediate $\mathbb{Z}_p$-extensions,
J.\  Number Theory, \textbf{231}, February (2022), Pages 214--238.

\bibitem{Fontaine-Mazur} J.-M. Fontaine, B. Mazur,
Geometric Galois representations.
In: Elliptic Curves, Modular Forms and Fermat's Last Theorem, edited by J. Coates, and S.T. Yau. International Press, Boston (1995).

\bibitem{for11} P. Forr\'e,
Strongly free sequences and pro-$p$-groups of cohomological dimension 2,
Journal für die Reine und Angewandte Mathematik (2011) (658).

\bibitem{Gar14} J. G\"artner,
R\'edei symbols and arithmetical mild pro-2-groups, Ann.\ Math.\
Qu\'e.\ \textbf{38} (2014), no.\ 1, 13--36.

\bibitem{Gar15} J. G\"artner,
Higher Massey products in the cohomology of mild pro-$p$-groups, J.\
Algebra \textbf{422} (2015), 788--820.

\bibitem{Iha86} Y. Ihara,
On Galois representations arising from towers of coverings of $\mathbf{P}^1 \setminus \{0,1,\infty\}$.
Invent.\ Math.\ \textbf{86} (1986), no.\ 3, 427--459.

\bibitem{Koch77} H. Koch, Uber Pro-$p$-Gruppen der kohamologischen Dimension $2$,  Math. Nachr. 78,285-289 (1977)

\bibitem{Koch} H. Koch,
Galois theory of $p$-extensions,
Springer Monographs in Mathematics, Springer-Verlag, Berlin, (2002).

\bibitem{Lab06} J. Labute,
Mild pro-$p$-groups and Galois groups of $p$-extensions of $\mathbb Q$,
J.\ Reine Angew.\ Math.\ \textbf{596} (2006), 155--182.

\bibitem{Lab08} J. Labute,
Fabulous pro-$p$-groups.
Ann. Sci. Math. Qu\'{e}bec 32
(2008): 189-197.

\bibitem{Maire14}  C. Maire,
Some examples of fab and mild pro-$p$-groups with trivial cup-product. Kyushu J. Math. 68(2) (2014), 359--376.

\bibitem{Minac21}  J. Mina\v{c}, F. Pasini, C. Quadrelli, N. D. Tân, Mild pro-$p$ groups and the Koszulity conjectures. arXiv preprint arXiv:2106.03675 (2021).

\bibitem{NSW} J. Neukirch, A. Schmidt and K. Wingberg,
Cohomology of number fields, Second edition,
Grundlehren der Mathematischen Wissenschaften \textbf{323},
Springer-Verlag, Berlin, (2008).

\bibitem{pari} The PARI Group,
PARI/GP version 2.7.4, Univ. Bordeaux, (2015).\\
\verb+http://pari.math.u-bordeaux.fr/+

\bibitem{Schmidt2006} A. Schmidt,
Circular sets of prime numbers and $p$-extensions of the rationals.
Journal f\"{u}r die reine und
angewandte Mathematik.
(2006) 115--30.

\bibitem{Sch07} A. Schmidt,
Rings of integers of type $K(\pi, 1)$,
Doc.\ Math.\ \textbf{12} (2007), 441--471.

\bibitem{Sch10} A. Schmidt, \"{U}ber pro-$p$-fundamentalgruppen markierter arithmetischer kurven, J.
reine u. ang. Math. 640 (2010), 203--235.

\bibitem{Vog05} D. Vogel,
On the Galois group of $2$-extensions with restricted ramification,
J.\ Reine Angew.\ Math.\ \textbf{581} (2005), 117--150.

\bibitem{Vog06} D. Vogel,
 Circular sets of primes of imaginary quadratic number fields,
 der Forschergruppe Algebraische Zykel und $L$-Funktionen Regensburg/ Leipzig Nr. 5, (2006).

\bibitem{Wing07} K. Wingberg,
Arithmetical Koch groups. (2007) preprint.

\end{reference}

\vspace*{10pt}

{\footnotesize
\noindent\textsc{Zakariae Bouazzaoui}\\
Department of Mathematics, Faculty of Sciences of Meknès, Morocco. \\
\texttt{z.bouazzaoui@edu.umi.ac.ma}
\\[2mm]%
\noindent\textsc{Abdelaziz El Habibi}\\
Department of Mathematics, Faculty of Sciences, Mohammed First
University, Oujda, Morocco. \\
\texttt{a.elhabibi@ump.ac.ma}
\\[2mm]%
}

\end{document}